\newcommand{\End}{{\mathrm{End}}}
\newcommand{\GL}{{\mathrm{GL}}}
\newcommand{\Hom}{{\mathrm{Hom}}}
\newcommand{\rk}{{\mathrm{k}}}
\newcommand{\Sp}{{\mathrm{Sp}}}
\newcommand{\tr}{{\mathrm{tr}}}
\newcommand{\vsp}{{\vspace{0.2in}}}
\newcommand{\con}{\textit{C}}
\newcommand{\oD}{\textit{D}}
\newcommand{\g}{\mathfrak g}
\renewcommand{\rk}{\mathrm k}
\newcommand{\C}{\mathbb{C}}
\renewcommand{\H}{\mathbb{H}}
\newcommand{\G}{\mathbf{G}}
\newcommand{\E}{\mathbf{E}}
\newcommand{\J}{\mathbf{J}}
\renewcommand{\H}{\mathbf{H}}
\newcommand{\ve}{{\vee}}
\newcommand{\la}{\langle}
\newcommand{\ra}{\rangle}
\newcommand{\be}{\begin {equation}}
\newcommand{\ee}{\end {equation}}
\newcommand{\bee}{\begin {equation*}}
\newcommand{\eee}{\end {equation*}}
\theoremstyle{Theorem}
\theoremstyle{Theorem}
\newtheorem{introconjecture}{Conjecture}
\newtheorem{introtheorem}[introconjecture]{Theorem}
\theoremstyle{Theorem}
\newtheorem{lem}{Lemma}[section]
\newtheorem{leml}[lem]{Lemma}
\theoremstyle{Theorem}
\newtheorem{prp}{Proposition}[section]
\newtheorem{lemp}[prp]{Lemma}
\newtheorem{prpp}[prp]{Proposition}
\theoremstyle{Plain}
\theoremstyle{Definition}
\begin{document}

\title[Multiplicity preservation]{Multiplicity preservation for orthogonal-symplectic  and unitary dual pair correspondences}

\author{Jian-Shu Li}
\author{Binyong Sun}
\author{Ye Tian}

\address{Department of Mathematics, Hong Kong University of Science and Technology, Clear Water Bay, Hong Kong}
\email{matom@ust.hk}
\address{Academy of Mathematics and Systems Science, Chinese Academy of
Sciences, Beijing, 100190, P.R. China} \email{sun@math.ac.cn}

\address{Academy of Mathematics and Systems Science, Chinese Academy of
Sciences, Beijing, 100190, P.R. China}

\email{ytian@math.ac.cn}

\keywords{Howe duality conjecture, oscillator representation,
classical group, irreducible representation}

%\thanks{The second author is supported by NSFC (Grant No. 10801126)}

\begin{abstract}
Over a non-archimedean local field of characteristic zero, we prove
the multiplicity preservation for orthogonal-symplectic dual pair
correspondences and unitary dual pair correspondences.
\end{abstract}

\maketitle

%\tableofcontents

\section{Introduction}

Fix a non-archimedean local field $\rk$ of characteristic zero, and
a continuous involution $\tau$ on it. Denote by $\rk_0$ the fixed
points of $\tau$. Then either $\rk=\rk_0$ or $\rk$ is a quadratic
extension of $\rk_0$. Let $\epsilon=\pm 1$ and let $E$ be an
$\epsilon$-hermitian space, namely it is a finite dimensional
$\rk$-vector space,  equipped with a non-degenerate $\rk_0$-bilinear
map
\[
  \la\,,\,\ra_E:E\times E\rightarrow \rk
\]
satisfying
\[
     \la u,v\ra_E=\epsilon\la v,u\ra_E^\tau, \quad \la au,v\ra_E=a\la u,
     v\ra_E,\quad a\in A,\, u,v\in E.
\]
Write $\epsilon'=-\epsilon$, and let $(E',\la\,,\,\ra_{E'})$ be an
$\epsilon'$-hermitian space. Then
\[
   \E:=E\otimes_\rk E'
\]
is a $\rk_0$-symplectic space under the form
\[
  \la u\otimes u', v\otimes v'\ra_\E:=\tr_{\rk/\rk_0}(\la u, v\ra_E \, \la
  u',v'\ra_{E'}).
\]

Denote by
\begin{equation}\label{meta}
   1\rightarrow \{\pm 1\}\rightarrow
   \widetilde{\Sp}(\E)\rightarrow
   \Sp(\E)\rightarrow 1
\end{equation}
the metaplectic cover of the symplectic group $\Sp(\E)$. Denote by
\[
 \H:=\E\times \rk_0
\]
the Heisenberg group associated to $\E$, whose multiplication is
given by
\[
   (u,t)(u',t'):=(u+u', t+t'+\la u,u'\ra_\E).
\]
The group $\Sp(\E)$ acts on $\H$ as automorphisms by
\begin{equation}\label{actsp0}
  g.(u,t):=(gu,t).
\end{equation}
It induces an action of $\widetilde{\Sp}(\E)$ on $\H$, and further
defines a semidirect product $\widetilde{\Sp}(\E)\ltimes \H$.

Fix a non-trivial character $\psi$ of $\rk$, and denote by
${\omega}_\psi$ the corresponding smooth oscillator representation
of $\widetilde{\Sp}(\E)\ltimes \H$. Up to isomorphism, this is the
only genuine smooth representation which, as a representation of
$\H$, is irreducible and has central character $\psi$. Recall that
in general, if $H$ is a group together with an embedding of $\{\pm
1\}$ in its center, a representation of $H$ is called genuine if the
element $-1\in H$ acts via the scalar multiplication by $-1$.

Denote by $G$ the group of all $\rk$-linear automorphisms of $E$
which preserve the form $\la\,,\,\ra_E$. It is thus an orthogonal
group, a symplectic group or a unitary group. The group $G$ is
obviously mapped into $\Sp(\E)$. Define the fiber product
\[
  \widetilde{G}:=\widetilde{\Sp}(\E)\times_{\Sp(\E)} G,
\]
which is a double cover of $G$. Similarly, we define $G'$ and
$\widetilde G'$. As usual, the product group $\widetilde{G}\times
\tilde{G'}$ is mapped into $\widetilde{\Sp}(\E)$. \vsp

The goal of this paper is to prove the following theorem, which is
usually called the multiplicity preservation for theta
correspondences, and is also called the Multiplicity One Conjecture
by Rallis in \cite{Ra84}. It is complementary to the famous Local
Howe Duality Conjecture.
\begin{introtheorem}\label{theorem}
For every genuine irreducible admissible smooth representation $\pi$
of $\widetilde{G}$, and $\pi'$ of $\widetilde{G}'$, one has that
\[
   \dim \Hom_{\widetilde{G}\times \widetilde{G}'}(\omega_\psi,
   \pi\otimes \pi')\leq 1.
\]
\end{introtheorem}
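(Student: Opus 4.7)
The plan is to reduce the multiplicity bound to a uniqueness statement for invariant distributions on a homogeneous space, via the Moeglin--Vign\'eras--Waldspurger (MVW) involution and a doubling construction, and then to settle that uniqueness by orbit analysis.

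To set up the reduction, recall the MVW anti-involutions $\sigma_G$ on $\widetilde{G}$ and $\sigma_{G'}$ on $\widetilde{G}'$: for every irreducible admissible genuine $\pi$ (resp.~$\pi'$) one has $\pi\circ \sigma_G \cong \pi^\vee$ (resp.~$\pi'\circ \sigma_{G'} \cong \pi'^\vee$), and the combined involution intertwines $\omega_\psi$ with $\omega_\psi^\vee$ up to a harmless twist. Given two linearly independent elements $\ell_1,\ell_2 \in \Hom_{\widetilde{G}\times \widetilde{G}'}(\omega_\psi, \pi\otimes \pi')$, the $(\widetilde{G}\times \widetilde{G}')^2$-equivariant maps $\ell_i\otimes \ell_j^\vee$ yield linearly independent homomorphisms $\omega_\psi\otimes \omega_\psi^\vee \to (\pi\otimes \pi^\vee)\otimes (\pi'\otimes \pi'^\vee)$. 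Composing with the canonical (Schur) matrix-coefficient pairings $\pi\otimes \pi^\vee \to \C$ and $\pi'\otimes \pi'^\vee \to \C$ and restricting to the diagonal $\Delta(\widetilde{G}\times \widetilde{G}')$, irreducibility of $\pi$ and $\pi'$ ensures via Burnside density that these images remain linearly independent in $\Hom_{\Delta(\widetilde{G}\times \widetilde{G}')}(\omega_\psi\otimes \omega_\psi^\vee,\C)$. It therefore suffices to bound \emph{this} last space by $1$.

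The next move is the doubling. I identify $\omega_\psi \otimes \omega_\psi^\vee$ with the oscillator representation of the doubled symplectic space $\E\oplus \E^-$, where $\E^-$ denotes $\E$ with its form negated, acted on by the metaplectic cover of $\Sp(\E\oplus \E^-)$. The diagonal $\Delta(\widetilde{G}\times \widetilde{G}')$ embeds naturally into this larger metaplectic group. By choosing a polarisation of $\E\oplus \E^-$ adapted to the diagonal, the Schr\"odinger model realises $\omega_\psi\otimes \omega_\psi^\vee$ as a Schwartz space on an explicit $\rk_0$-variety built from the tensor product $E\otimes_\rk E'$. The invariants in question translate into $(G\times G')$-invariant tempered distributions on this variety, which decomposes into finitely many $(G\times G')$-orbits parameterised by rank and discriminant invariants of the induced $\epsilon$-hermitian (or symmetric) pairings.

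The main obstacle is showing this distribution space is at most one-dimensional. Following Bernstein's localisation principle, I would stratify the variety by $(G\times G')$-orbits and handle each stratum in turn. On the open stratum the stabiliser acts transitively, so at most one invariant distribution (a measure on the open orbit) is supported there. For each closed stratum one applies Frobenius descent to its normal bundle: the normal fibre is an affine space carrying natural scaling actions whose weights on the conormal directions, combined with the $\sigma$-symmetry imposed by the MVW involutions, rule out new invariant distributions. This ruling-out is carried out by induction on $\min(\dim E,\dim E')$, the boundary strata being reduced to smaller-dimensional dual pair problems and ultimately to multiplicity-one results for $\GL_n$. The orthogonal--symplectic and unitary cases are treated in parallel; the most delicate stratum is the one on which the induced form on a proper subspace remains non-degenerate, whose analysis requires careful bookkeeping of signs arising from the MVW involutions and from the metaplectic cover.
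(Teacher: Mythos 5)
There is a genuine gap, and it occurs at the pivotal reduction step. After forming the functionals $\ell_i\otimes\ell_j^\vee$ composed with the canonical pairings, you discard all reference to $\pi$ and $\pi'$ and assert that it suffices to bound $\dim\Hom_{\Delta(\widetilde{G}\times\widetilde{G}')}(\omega_\psi\otimes\omega_\psi^\vee,\C)$ by $1$. That target statement is false. In the Schr\"odinger model attached to the diagonal polarisation of $\E\oplus\E^-$, this Hom space is precisely the space of $(G\times G')$-invariant distributions on $\E=E\otimes_\rk E'$, which is infinite-dimensional whenever $E,E'\neq 0$: the delta distribution at $0$, the invariant measures on the various closed orbits, and $f\,d\mu$ for any nonconstant invariant polynomial $f$ are already pairwise linearly independent. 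Your linear-independence step is correct (it is a Schur-type argument on $\tau^{\oplus n}$ showing the images of $\ell_i\otimes\ell_j^\vee$ inject), but it yields no contradiction because the ambient space is large; and the orbit-by-orbit analysis of your last paragraph cannot ``rule out'' invariant distributions on the closed strata, since they genuinely exist. To salvage a doubling argument one must retain the $\pi\boxtimes\pi'$-isotypic component (via the see-saw and the Rallis--Kudla filtration of the doubled Weil representation), a substantially harder route than what you sketch.

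For contrast, the paper does not double at all. It applies a Gelfand--Kazhdan criterion to the Jacobi group $\J=(G\times G')\ltimes\H$ and the irreducible representation $\Pi=\omega_\psi\otimes\pi^\vee\otimes\pi'^\vee$: the MVW elements $(g,-1)\in\breve{G}$ supply an anti-automorphism of $\J$ preserving every $\G\times\G$-orbit (the geometric content of \cite[Proposition 4.I.2]{MVW87}, transported through the Heisenberg group), whence every bi-$\G$-invariant generalized function on $\J$ is invariant under it and $\dim\Hom_\G(\Pi,\C)\cdot\dim\Hom_\G(\Pi^\vee,\C)\le 1$. The indispensable final input is the equality of these two factors, which is \cite[Theorem 1.4]{Sun09}; your sketch contains no substitute for it. If you wish to keep your framework, the piece to repair is exactly this: bound not all $\Delta$-invariant functionals on $\omega_\psi\otimes\omega_\psi^\vee$, but only those factoring through $(\pi\otimes\pi')\otimes(\pi\otimes\pi')^\vee$.
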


When the residue characteristic of $\rk$ is odd, Theorem A is proved
by Waldspurger in \cite{Wa90}. The archimedean analog of Theorem A
is proved by Howe in \cite{Ho89}.

\section{A geometric result}
We continue with the notation of the Introduction. Following
\cite[Proposition 4.I.2]{MVW87}, we extend $G$ to a larger group
$\breve{G}$, which contains $G$ as a subgroup of index two, and
consists pairs $(g,\delta)\in\GL_{\rk_0}(E)\times \{\pm 1\}$ such
that either
\[
  \delta=1 \quad\textrm{and}\quad g\in G,
\]
or
\[
\label{dutilde}
  \left\{
   \begin{array}{ll}
     \delta=-1,&\medskip\\
     g(au)=a^\tau g(u),\quad & a\in \rk,\, u\in E,\quad \textrm{ and}\medskip\\
     \la gu,gv\ra_E=\la v,u\ra_E,\quad & u,v\in E.
   \end{array}
   \right.
\]
Similarly, we define a group $\breve{G'}$ and a group
$\breve{\Sp}(\E)$, which extend $G'$ and $\Sp(\E)$, respectively.

In general, if a group $\breve H$ is equipped with a subgroup $H$ of
index two, we will associate on it the nontrivial quadratic
character which is trivial on $H$. We use $\chi_H$ to indicate this
character.

Denote the fiber product
\[
  \breve{\G}:=\breve{G}\times_{\{\pm 1\}} \breve{G'}=\{(g,g',\delta)\mid (g,\delta)\in \breve{G},\,(g',\delta)\in \breve{G'}\},
\]
which contains
\[
  \G:=G\times G'
\]
as a subgroup of index two. Define a group homomorphism
\begin{equation}\label{xi}
   \begin{array}{rcl}
        \xi:\breve{\G}&\rightarrow &\breve{\Sp}(\E),\smallskip\\
           (g,g',\delta)&\mapsto& (g\otimes g',\delta).\\
   \end{array}
\end{equation}
Let $\breve{\Sp}(\E)$ act on the Heisenberg group $\H$ as group
automorphisms by
\begin{equation}\label{actsp}
  (g,\delta).(u,t):=(gu, \delta t),
\end{equation}
which extends the action (\ref{actsp0}). By using the homomorphism
$\xi$, this induces an action of $\breve{\G}$ on $\H$, and further
defines a semidirect product
\[
  \breve{\J}:=\breve{\G}\ltimes \H,
\]
which contains
\[
  \J:=\G\ltimes \H
\]
as a subgroup of index two.

Let the group
\begin{equation}\label{semid}
  \{\pm 1\}\ltimes (\breve{\G}\times \breve{\G})
\end{equation}
act on $\breve{\J}$ by
\begin{equation}\label{actionall}
   (\delta, \breve{\mathbf g}_1, \breve{\mathbf g}_2). \breve{\mathbf j}:=(\breve{\mathbf g}_1\,
   \breve{\mathbf j}\, \breve{\mathbf g}_2^{-1})^\delta,
\end{equation}
where the semidirect product in (\ref{semid}) is defined by the
action
\[
  -1.(\breve{\mathbf g}_1, \breve{\mathbf g}_2):=(\breve{\mathbf g}_2, \breve{\mathbf g}_1).
\]
The fibre product
\[
   \{\pm 1\}\ltimes_{\{\pm 1\}} (\breve{\G}\times_{\{\pm 1\}}
   \breve{\G})=\{(\delta,\breve{\mathbf g}_1, \breve{\mathbf
   g}_2)\mid \chi_\G(\breve{\mathbf g}_1)=\chi_\G(\breve{\mathbf
   g}_2)=\delta\}
\]
is a subgroup of (\ref{semid}). It contains $\G\times \G$ as a
subgroup of index two, and stabilizes $\J$ under the action
(\ref{actionall}).

We prove the following proposition in the remaining of this section.

\begin{prpp}\label{orbite1}
Every  $\G\times \G$-orbit in $\J$ is stable under the group $\{\pm
1\}\ltimes_{\{\pm 1\}}(\breve{\G}\times_{\{\pm 1\}}\breve{\G})$.
\end{prpp}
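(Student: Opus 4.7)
The plan exploits that $\G \times \G$ is an index-$2$ subgroup of $\{\pm 1\} \ltimes_{\{\pm 1\}}(\breve{\G} \times_{\{\pm 1\}} \breve{\G})$: the orbit $\G \mathbf{j} \G$ is stable under the larger group if and only if, for each $\mathbf{j} \in \J$, some element of the non-trivial coset $\sigma = (-1, \breve{\mathbf{g}}_1, \breve{\mathbf{g}}_2)$, with $\breve{\mathbf{g}}_1, \breve{\mathbf{g}}_2 \in \breve{\G}\setminus\G$, sends $\mathbf{j}$ into $\G \mathbf{j} \G$. The requirement $(\breve{\mathbf{g}}_1 \mathbf{j} \breve{\mathbf{g}}_2^{-1})^{-1} \in \G \mathbf{j} \G$, after inverting and absorbing the inner translations by $\G$ into $\breve{\mathbf{g}}_1, \breve{\mathbf{g}}_2$, is equivalent to the key claim: for every $\mathbf{j} \in \J$ there exist $\breve{\mathbf{k}}_1, \breve{\mathbf{k}}_2 \in \breve{\G} \setminus \G$ with
\[
  \breve{\mathbf{k}}_1 \, \mathbf{j} \, \breve{\mathbf{k}}_2 = \mathbf{j}^{-1}.
\]

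Decompose $\mathbf{j} = (\mathbf{g}, h)$ in $\breve{\G} \ltimes \H$ with $\mathbf{g} \in \G$ and $h = (u, t) \in \H$. A direct computation in the semidirect product yields
$\breve{\mathbf{k}}_1 \mathbf{j} \breve{\mathbf{k}}_2 = (\breve{\mathbf{k}}_1 \mathbf{g} \breve{\mathbf{k}}_2,\, \breve{\mathbf{k}}_1.h)$ and $\mathbf{j}^{-1} = (\mathbf{g}^{-1}, \mathbf{g}^{-1}.h^{-1})$, so the key claim splits into
$\breve{\mathbf{k}}_1 \mathbf{g} \breve{\mathbf{k}}_2 = \mathbf{g}^{-1}$ together with $\breve{\mathbf{k}}_1.h = \mathbf{g}^{-1}.h^{-1}$. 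The first equation is solved by setting $\breve{\mathbf{k}}_2 := \mathbf{g}^{-1} \breve{\mathbf{k}}_1^{-1} \mathbf{g}^{-1}$ (which lies in $\breve{\G} \setminus \G$ for any $\breve{\mathbf{k}}_1 \in \breve{\G} \setminus \G$). For the second, using that elements of $\breve{\G} \setminus \G$ act on $\H = \E \times \rk_0$ by flipping the $\rk_0$-coordinate, the equation becomes $\breve{\mathbf{k}}_1.u = -\mathbf{g}^{-1}.u$. Since $(-\mathrm{id}_E, \mathrm{id}_{E'}) \in \G$ acts on $\E$ as $-\mathrm{id}$, the right-hand side already lies in $\G.u$, so the proposition reduces to the geometric fact: \emph{for every $u \in \E$, the orbit $\breve{\G}.u$ coincides with $\G.u$} --- equivalently, some element of $\breve{\G} \setminus \G$ sends $u$ into its $\G$-orbit.

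To establish this geometric fact, the strategy is to interpret $u$ as a sesquilinear map $\varphi_u \colon (E')^{\ve} \to E$; use a Witt-type extension theorem to obtain orthogonal decompositions $E = E_1 \oplus E_2$ and $E' = E_1' \oplus E_2'$ adapted to $u$ (with $E_1 = \Im \varphi_u$ and $E_1'$ a non-degenerate complement of $\Ker \varphi_u$, each carrying an induced non-degenerate hermitian form, mutually dual under $\varphi_u$); and then construct outer involutions $\breve{g} \in \breve{G} \setminus G$ and $\breve{g}' \in \breve{G'} \setminus G'$, furnished by Proposition 4.I.2 of \cite{MVW87} applied to each factor, adapted to these splittings and chosen so that their restrictions to the paired subspaces $E_1$ and $E_1'$ are mutually adjoint with respect to the pairing induced by $u$. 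A direct calculation then verifies $(\breve{g} \otimes \breve{g}')(u) \in \G.u$. The principal obstacle is this final step --- matching the two outer involutions compatibly on the paired subspaces so that their tensor product preserves the $\G$-orbit of $u$; everything else is formal manipulation inside the semidirect product.
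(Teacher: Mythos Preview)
Your reduction from Proposition~\ref{orbite1} to the geometric statement ``every $\G$-orbit in $\E$ is $\breve{\G}$-stable'' is correct and matches the paper's Lemma~2.3, though the paper reaches it more quickly by observing that every $\G\times\G$-orbit in $\J$ meets $\H$, that the diagonal subgroup $\{\pm 1\}\times_{\{\pm 1\}}\Delta(\breve{\G})$ preserves $\H$, and that as a $\breve{\G}$-space $\H=\E\times\rk_0$ with the second factor trivial.

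The gap is in your treatment of the geometric statement. First, the orthogonal decomposition you propose need not exist: the image $E_1=\Im\varphi_u\subset E$ is in general \emph{not} a nondegenerate subspace. For instance, with $E$ symplectic of dimension~$2$ and $E'$ orthogonal, the tensor $e_1\otimes e_1'$ has $\Im\varphi_u=\rk e_1$, which is isotropic. So the splittings $E=E_1\oplus E_2$, $E'=E_1'\oplus E_2'$ as stated are not always available. Second, and more fundamentally, you correctly identify the ``matching'' of outer involutions on $E$ and $E'$ as the principal obstacle, but you do not resolve it; invoking \cite[Proposition~4.I.2]{MVW87} separately on each factor produces $\breve g$ and $\breve g'$ with no a~priori relation between them.

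The paper's device for the geometric fact sidesteps the matching problem entirely. One identifies $\E\cong\E':=\Hom_\rk(E,E')$, and then---this is the key move---\emph{quotients by $G'$ first}. By Witt's theorem, two maps $\phi_1,\phi_2\in\E'$ lie in the same $G'$-orbit if and only if they have the same kernel $F\subset E$ and induce the same pullback form $\la x\,\cdot,\cdot\ra_E:=\la\phi(\cdot),\phi(\cdot)\ra_{E'}$; this gives a $\breve G$-equivariant embedding $G'\backslash\E'\hookrightarrow\tilde\g=\{(x,F):x\in\g,\ x|_F=0\}$. Now one needs an outer element only on the $G$-side, and \cite[Proposition~4.I.2]{MVW87} supplies $(g,-1)\in\breve G$ with $gxg^{-1}=-x$ and $gF=F$. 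The $G'$-side comes for free from the quotient. This is exactly the matching you were missing, obtained by reducing a two-sided problem to a one-sided one.
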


\vsp Let $\breve{\G}$ act $\rk_0$-linearly on $\E$  by
\begin{equation}\label{acte}
  (g,g',\delta).u\otimes u':=\delta gu\otimes g'u'.
\end{equation}

\begin{lemp}\label{orbite2}
Every $\G$-orbit in $\E$ is $\breve{\G}$-stable.
\end{lemp}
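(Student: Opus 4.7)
The plan is to reduce the lemma to the claim that, for each $v \in \E$, there exists $\iota \in \breve\G\setminus\G$ stabilizing $v$. Given such an $\iota$, every element of $\breve\G$ can be written $\mathbf g \iota$ for some $\mathbf g \in \G$ since $[\breve\G:\G]=2$, and then $\mathbf g\iota.v = \mathbf g.v \in \G.v$, so $\breve\G.v = \G.v$.

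To construct such $\iota = (g,g',-1)$, I would write $v = \sum_{i=1}^r u_i \otimes u'_i$ with the $u_i \in E$ and $u'_i \in E'$ both $\rk$-linearly independent, setting $V := \span\{u_i\}$ and $V' := \span\{u'_i\}$. I look for $\tau$-semilinear anti-isometries $g \in \breve G$ and $g' \in \breve{G'}$ (with $\delta=-1$) such that $-\sum g(u_i) \otimes g'(u'_i) = \sum u_i \otimes u'_i$ in $\E$. Representing $g|_V$ by a matrix $A$ and $g'|_{V'}$ by a matrix $B$ in the bases $\{u_i\}$ and $\{u'_i\}$, the balancing relation $au\otimes u' = u\otimes a^\tau u'$ in the tensor product shows this tensor identity holds if and only if $B = -\bar A^{-T}$. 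The remaining task is to choose $A$ so that simultaneously $A$ is an anti-isometry of $(V,H_V)$, meaning $A^T H_V \bar A = H_V^T$, and $B = -\bar A^{-T}$ is an anti-isometry of $(V',H_{V'})$, meaning $H_{V'} = \bar A H_{V'}^T A^T$; here $H_V = (\la u_i,u_j\ra_E)$ and $H_{V'} = (\la u'_i,u'_j\ra_{E'})$ are the restricted Gram matrices.

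I would solve this coupled system by a case analysis over the classical types. In the orthogonal--symplectic case, where $\tau=1$, the system reduces to finding $A \in O(V,H_V)$ with $A H_{V'} A^T = -H_{V'}$, which for non-degenerate restricted forms admits a solution by examining the action of $O(V,H_V)$ on skew forms on $V$ (for $r=2$ this is just $\det A = -1$). In the unitary cases, the conditions translate into norm equations over $\rk/\rk_0$ solvable by Hilbert 90. Having constructed $g|_V$ and $g'|_{V'}$, I extend them to anti-isometries of the full spaces $E$ and $E'$ via Witt's extension theorem, applied after composing with fixed global anti-isometries of $E$ and $E'$ (whose existence in every type is elementary, constructed from an orthogonal basis by coordinate-wise $\tau$-conjugation). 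The main obstacle I foresee is the degenerate case, where the restricted forms on $V$ or $V'$ have nontrivial radicals; there the matrix equations become singular, and one must decompose $V$ and $V'$ along the radicals and build $A, B$ block by block, verifying at the end that the global tensor identity still holds. This is where most of the technical work of the proof is concentrated.
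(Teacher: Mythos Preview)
Your overall strategy---reduce to exhibiting, for each $v \in \E$, an element of $\breve\G \setminus \G$ fixing $v$---is also the paper's. The divergence is in how that stabilizer is found. The paper identifies $\E$ with $\E' = \Hom_\rk(E,E')$ $\breve\G$-equivariantly, then embeds $G'\backslash\E'$ into $\tilde\g = \{(x,F): x \in \g,\ x|_F = 0\}$ by sending $\phi$ to $(x,\ker\phi)$ with $\la xu,v\ra_E = \la\phi(u),\phi(v)\ra_{E'}$. Everything then rests on the assertion that every $(x,F)\in\tilde\g$ is fixed by some $(g,-1)\in\breve G$, i.e.\ $gxg^{-1}=-x$ and $gF=F$; this is \cite[Proposition~4.I.2]{MVW87}, quoted as a black box.

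Your coupled matrix system is a coordinate version of precisely this statement. In the orthogonal--symplectic case, asking for $A$ with $A^T H_V A = H_V$ and $A H_{V'} A^T = -H_{V'}$ is asking for an isometry of $H_V$ that conjugates the Lie-algebra element $H_V^{-1}H_{V'}$ to its negative---the MVW assertion transported to $V$. Your phrase ``admits a solution by examining the action of $O(V,H_V)$ on skew forms'' is therefore not an argument but a restatement of what must be proved, and the appeal to Hilbert~90 in the unitary case is far too thin for what is a genuine conjugacy problem in classical Lie algebras. The degenerate situation you flag (where $H_V$ or $H_{V'}$ is singular) is exactly the extra datum $F$ that the paper's $\tilde\g$ formulation absorbs uniformly; your proposed block-by-block treatment would, if completed, amount to reproving \cite[Proposition~4.I.2]{MVW87} from scratch. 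That is the gap: the hard step is asserted, not established, and the paper's route succeeds by recognizing that this step is already a known theorem and reducing to it through one clean invariant-theoretic map rather than a case analysis. (A minor slip: in $E\otimes_\rk E'$ the balancing is $au\otimes u' = u\otimes au'$ with no $\tau$, so the coupling should read $B=-A^{-T}$.)
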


We first prove

\begin{lemp}\label{orbite3}
Lemma \ref{orbite2} implies Proposition \ref{orbite1}.
\end{lemp}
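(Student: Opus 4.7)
The plan is to observe that $\G\times\G$ sits as the index-two subgroup $\{\delta=1\}$ of $\{\pm 1\}\ltimes_{\{\pm 1\}}(\breve{\G}\times_{\{\pm 1\}}\breve{\G})$, so that the stability assertion reduces to checking that a single representative of the nontrivial coset maps each $\G\times\G$-orbit in $\J$ to itself. I would first parametrize $\mathbf{j}\in\J$ as a triple $(\mathbf{g},u,t)\in\G\times\E\times\rk_0$ and verify directly, using the multiplication in $\G\ltimes\H$, that
\[
    (\mathbf{g}_1,\mathbf{g}_2)\cdot(\mathbf{g},u,t) = (\mathbf{g}_1\mathbf{g}\mathbf{g}_2^{-1},\mathbf{g}_1 u, t);
\]
this identifies the $\G\times\G$-orbit of $\mathbf{j}$ with $\G\times(\G\cdot u)\times\{t\}$, namely the set of elements of $\J$ sharing the same central value $t$ and the same $\G$-orbit in the middle slot.

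Next I would pick arbitrary $\breve{\mathbf{g}}_i=(g_i,g_i',-1)\in\breve{\G}\setminus\G$ for $i=1,2$ and track the image of $\mathbf{j}$ under $(-1,\breve{\mathbf{g}}_1,\breve{\mathbf{g}}_2)$. Using $\xi$ together with the action (\ref{actsp}) of $\breve{\Sp}(\E)$ on $\H$, the computation in $\breve{\J}$ produces
\[
    \breve{\mathbf{g}}_1\,\mathbf{j}\,\breve{\mathbf{g}}_2^{-1} = \bigl(\mathbf{g}'',(g_1\otimes g_1')u,-t\bigr)
\]
for some $\mathbf{g}''\in\G$---the three $\delta$-components multiplying to $+1$---with the Heisenberg centre picking up a sign $-1$. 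The inversion required by (\ref{actionall}) then restores the central slot to $t$ and yields a triple of the form $\bigl((\mathbf{g}'')^{-1},-(\mathbf{g}'')^{-1}(g_1\otimes g_1')u,t\bigr)$.

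Finally, to check that this triple lies in the target orbit I would invoke Lemma \ref{orbite2}: by definition (\ref{acte}), $\breve{\mathbf{g}}_1$ sends $u$ to $-(g_1\otimes g_1')u$, so the lemma guarantees $-(g_1\otimes g_1')u\in\G\cdot u$. Since $\G\cdot u$ is $\G$-stable, its image under $(\mathbf{g}'')^{-1}$ remains inside $\G\cdot u$, and the triple above belongs to $\G\times(\G\cdot u)\times\{t\}$, as required. The main obstacle I foresee is the careful bookkeeping of two different sign twists---the $\delta$ appearing in the Heisenberg action (\ref{actsp}) on the central $\rk_0$-component and the $\delta$ inserted into the definition (\ref{acte}) of the $\breve{\G}$-action on $\E$ whose orbits Lemma \ref{orbite2} controls. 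Happily, the inversion in (\ref{actionall}) is designed precisely to reconcile them, and this is what allows Lemma \ref{orbite2} to close the argument.
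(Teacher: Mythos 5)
Your proof is correct and follows essentially the same route as the paper: both arguments come down to observing that a $\G\times\G$-orbit in $\J$ is determined by the pair $(\G\cdot u,\,t)$ and that the extra elements of $\{\pm 1\}\ltimes_{\{\pm 1\}}(\breve{\G}\times_{\{\pm 1\}}\breve{\G})$ act on this data through the action (\ref{acte}) on $\E$ (trivially on $t$), so that Lemma \ref{orbite2} finishes the job. The only difference is presentational --- you carry out the computation explicitly on all of $\J$ for arbitrary coset representatives, whereas the paper first reduces to the Heisenberg subgroup $\H$ and the diagonal copy of $\breve{\G}$; your sign bookkeeping (the $-t$ from (\ref{actsp}) undone by the inversion in (\ref{actionall}), leaving the sign $\delta$ of (\ref{acte}) on the $\E$-slot) is accurate.
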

\begin{proof}
Note that every $\G\times \G$-orbit in $\J$ intersect the subgroup
$\H$, and the subgroup
\[
  \{\pm 1\}\times_{\{\pm 1\}}(\Delta(\breve{\G})) \quad\textrm{ of }\quad \{\pm 1\}\ltimes_{\{\pm 1\}}(\breve{\G}\times_{\{\pm
1\}}\breve{\G}) \] stabilizes $\H$, where ``$\Delta$" stands for the
diagonal group. Therefore in order to prove Proposition
\ref{orbite1}, it suffices to show that every $\Delta(\G)$-orbit in
$\H$ is $\{\pm 1\}\times_{\{\pm 1\}}(\Delta(\breve{\G}))$-stable.
Identify $\{\pm 1\}\times_{\{\pm 1\}}(\Delta(\breve{\G}))$ with
$\breve{\G}$. Then as a $\breve{\G}$-space,
\[
  \H=\E\times \rk,
\]
where $\E$ carries the action (\ref{acte}), and $\rk$ carries the
trivial $\breve{\G}$-action. This finishes the proof.
\end{proof}

Let $\breve{\G}$ act $\rk_0$-linearly on
\[
  \E':=\Hom_\rk(E,E')
\]
by
\[
  ((g,g',\delta).\phi)(u):=\delta \,g'(\phi(g^\tau u)),
\]
where
\[
  g^\tau:=\left\{
         \begin{array}{ll}
           g^{-1}, &\quad
           \textrm{if } \delta=1,\medskip\\
           \epsilon g^{-1},& \quad
           \textrm{if } \delta=-1.\\
         \end{array}
   \right.
\]
Then one checks that the $\rk_0$-linear isomorphism
\[
  \begin{array}{rcl}
    \E&\rightarrow &\E',\\
    u\otimes u'&\mapsto&(v\mapsto \la v,u\ra_E u')
  \end{array}
\]
is $\breve{\G}$-intertwining. Therefore Lemma \ref{orbite2} is
equivalent to the following
\begin{lemp}\label{orbite4}
Every $\G$-orbit in $\E'$ is $\breve{\G}$-stable.
\end{lemp}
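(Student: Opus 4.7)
The reductions in Lemmas \ref{orbite2} and \ref{orbite3} show that Proposition \ref{orbite1} follows from Lemma \ref{orbite4}, so the task is to prove Lemma \ref{orbite4}. Fix $\phi\in\E'=\Hom_\rk(E,E')$. Since $[\breve{\G}:\G]=2$, it suffices to exhibit a single element $(g_0,g_0',-1)\in\breve{\G}\setminus\G$ together with some $(h,h')\in\G$ satisfying $(g_0,g_0',-1).\phi=(h,h').\phi$; equivalently, $g_0\in\breve G\setminus G$ and $g_0'\in\breve{G'}\setminus G'$ with
\[
 -g_0'\circ\phi\circ(\epsilon g_0^{-1})=h'\circ\phi\circ h^{-1}.
\]

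The approach is to put $\phi$ in a normal form with respect to well-chosen $\rk$-bases of $E$ and $E'$, and then write down $g_0,g_0'$ explicitly in those bases. For the normal form, I would first recall that a $\G$-orbit on $\Hom_\rk(E,E')$ is classified, via Witt's extension theorem applied to the $\epsilon$- and $\epsilon'$-hermitian forms, by the isometry class of the subspace $\ker\phi\subset E$, the isometry class of $\Im\phi\subset E'$, and the isomorphism $E/\ker\phi\xrightarrow{\sim}\Im\phi$ induced by $\phi$. Using this, I would choose $\rk$-bases $\{e_i\}$ of $E$ and $\{e_j'\}$ of $E'$ adapted to Witt decompositions such that: (i) the Gram matrices of $\la\,,\,\ra_E$ and $\la\,,\,\ra_{E'}$ have entries in the fixed field $\rk_0$ and are in standard hyperbolic-plus-anisotropic form; (ii) the matrix of $\phi$ has entries in $\rk_0$, and is in fact a $0/1$ block matrix once the bases are aligned with $E=\ker\phi\oplus V$ and $E'=\Im\phi\oplus W$.

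Next, define $g_0\colon E\to E$ to be the $\tau$-conjugate-linear bijection that fixes each $e_i$ (with signs twisted on a Darboux subbasis when $\epsilon=-1$), and define $g_0'$ analogously. Condition (i), together with a direct check on basis vectors, yields $\la g_0u,g_0v\ra_E=\la v,u\ra_E$ for all $u,v\in E$, so $(g_0,-1)\in\breve G$, and likewise $(g_0',-1)\in\breve{G'}$. Condition (ii) combined with the conjugate-linearity of $g_0,g_0'$ gives $-g_0'\circ\phi\circ(\epsilon g_0^{-1})=\pm\phi$, where the sign depends only on $\epsilon,\epsilon'$ and can be absorbed by a central element of $G$ or $G'$ (or by adjusting the sign convention used in the definition of $g_0'$). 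This produces the desired $(h,h')\in\G$ with $(g_0,g_0',-1).\phi=(h,h').\phi$.

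The main obstacle is the construction of the simultaneously adapted basis in the first step. When $\ker\phi\subset E$ is a nondegenerate subspace and $\Im\phi\subset E'$ is nondegenerate, standard diagonalization of hermitian forms over $\rk_0$ suffices. In the general situation, one must choose a Witt decomposition of $E$ compatible with $\ker\phi$: that is, pick a hyperbolic basis of the radical $\ker\phi\cap\ker\phi^\perp$ and extend it to a basis of a Witt complement, and likewise for $E'$. A secondary technicality is the four-case split ($\epsilon=\pm 1$; $\rk=\rk_0$ or $\rk/\rk_0$ quadratic) needed to fix the signs in the MVW identity $\la g_0u,g_0v\ra_E=\la v,u\ra_E$; in each case one checks that the sign twist on the Darboux part of the basis is sufficient.
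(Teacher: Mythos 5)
There is a genuine gap at the normal-form step, and it is not the one you flag as ``the main obstacle.'' For $(g_0,g_0',-1).\phi$ to lie in the $\G$-orbit of $\phi$ you need (by the Witt-theorem description of the $G'$-orbits, cf.\ Lemma \ref{orbite5}) an element $g\in\breve G\setminus G$ that simultaneously satisfies $\la gu,gv\ra_E=\la v,u\ra_E$, preserves $\ker\phi$, \emph{and} behaves correctly with respect to the pulled-back form $B(u,v):=\la\phi(u),\phi(v)\ra_{E'}$ on $E$. Your conditions (i) and (ii) together ask for a basis of $E$ in which $\la\,,\,\ra_E$ is in standard hyperbolic-plus-diagonal form \emph{and} whose image under $\phi$ is a standard basis for $\la\,,\,\ra_{E'}$ restricted to $\Im\phi$; the latter is the same as asking that this basis also put $B$ in standard form. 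Two forms on the same space cannot in general be normalized by a common basis, so (i) and (ii) are jointly unachievable outside degenerate cases, and your proposed remedy (``standard diagonalization of hermitian forms over $\rk_0$ suffices'' when everything is nondegenerate) normalizes each form separately without addressing this. The simultaneous problem is exactly the content of \cite[Proposition 4.I.2]{MVW87}, whose proof needs the structure theory of the (skew-)self-adjoint operator relating $\la\,,\,\ra_E$ to $B$ (generalized eigenspaces, elementary divisors), not just Witt decompositions; the paper encodes the pair $(B,\ker\phi)$ as a point $(x,F)$ of $\tilde\g$ via Lemma \ref{orbite5} and quotes that result as a black box, whereas your route would have to reprove it.

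Two smaller points. First, the claim in (i) that the Gram matrix can be taken with entries in $\rk_0$ is false for a skew-hermitian space over a quadratic extension (a one-dimensional anisotropic such space has as its Gram ``matrix'' a nonzero trace-zero element); what the construction of $g_0$ actually requires is that the Gram matrix be symmetric as a matrix, which any diagonalizing basis provides. Second, when sign twists on a Darboux sub-basis are needed, the discrepancy between $g_0'\circ\phi\circ g_0^{-1}$ and $\phi$ is a priori a diagonal matrix of \emph{mixed} signs rather than a global sign, so it cannot be absorbed by a central element; arranging the twists on $E$ and $E'$ to be compatible through $\phi$ is again part of the simultaneous normal-form problem above.
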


Denote by
\[
  \g:=\{x\in \End_\rk(E)\mid \la xu,v\ra_E+\la u,xv\ra_E=0\}
\]
the Lie algebra of $G$, and put
\[
  \tilde{\g}:=\{(x,F)\mid x\in \g, F\,\textrm{ is a $\rk$-subspace of }E,
  \,x|_F=0\}.
\]
Let $\breve G$ act on $\tilde \g$ by
\[
  (g,\delta). (x,F):=(\delta gxg^{-1}, gF).
\]
The action of $\breve{\G}$ on $\E'$ induces an action of
\[
  \breve{G}=\breve{\G}/G'
\]
on the quotient space $G'\backslash\E'$.

\begin{lemp}\label{orbite5}
There is a $\breve G$-intertwining embedding from $G'\backslash\E'$
into $\tilde \g$.
\end{lemp}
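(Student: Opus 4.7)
The plan is to construct the embedding explicitly. For $\phi\in\E'=\Hom_\rk(E,E')$, set $F(\phi):=\ker\phi$ and define $x(\phi)\in\End_\rk(E)$ as the unique endomorphism determined by
\[
  \la x(\phi)u,v\ra_E \;=\; \la \phi(u),\phi(v)\ra_{E'}, \qquad u,v\in E,
\]
which exists by non-degeneracy of $\la\,,\,\ra_E$ and the fact that the right-hand side is $\rk$-linear in $u$ and $\rk$-conjugate-linear in $v$. I would propose the map $\Phi(\phi):=(x(\phi),F(\phi))$. Using $\epsilon'=-\epsilon$ together with the symmetries of both hermitian forms one computes
\[
  \la u,x(\phi)v\ra_E=\epsilon\la x(\phi)v,u\ra_E^\tau=\epsilon\la\phi(v),\phi(u)\ra_{E'}^\tau=\epsilon\epsilon'\la\phi(u),\phi(v)\ra_{E'}=-\la x(\phi)u,v\ra_E,
\]
so $x(\phi)\in\g$, and $x(\phi)|_{F(\phi)}=0$ is immediate from non-degeneracy. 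Since $G'$ acts by $\phi\mapsto g'\circ\phi$, which preserves both $\la\,,\,\ra_{E'}$ and $\ker\phi$, the map $\Phi$ descends to a well-defined map $G'\bs\E'\to\tilde\g$.

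Next I would verify $\breve G$-equivariance. Given $(g,\delta)\in\breve G$, lift to any $(g,h',\delta)\in\breve\G$; the result in $\tilde\g$ is independent of the lift by the previous step. A direct computation starting from $((g,h',\delta).\phi)(u)=\delta h'(\phi(g^\tau u))$, using the two cases in the definition of $g^\tau$ and the identity $\la h'u',h'v'\ra_{E'}=\la v',u'\ra_{E'}$ valid when $\delta=-1$, yields
\[
  x((g,h',\delta).\phi)=\delta\, g\, x(\phi)\, g^{-1},\qquad F((g,h',\delta).\phi)=gF(\phi),
\]
which is precisely the prescribed action of $(g,\delta)$ on $\tilde\g$. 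For injectivity, suppose $(x(\phi_1),F(\phi_1))=(x(\phi_2),F(\phi_2))$. Factor $\phi_i=\iota_i\circ\pi$ using the common quotient map $\pi:E\to E/F(\phi_1)$ and injections $\iota_i:E/F(\phi_1)\incl E'$. The identity $\la\iota_i\pi(u),\iota_i\pi(v)\ra_{E'}=\la x(\phi_i)u,v\ra_E$, being independent of $i$, shows that $\iota_2\circ\iota_1^{-1}$ is an $\epsilon'$-hermitian isometry between two subspaces of $E'$. By Witt's extension theorem for non-degenerate $\epsilon'$-hermitian spaces over $\rk$, it extends to some $\tilde g'\in G'$, and then $\phi_2=\tilde g'\circ\phi_1$, so $\phi_1$ and $\phi_2$ lie in the same $G'$-orbit.

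The main obstacle will be the $\delta=-1$ case of the equivariance computation: one must carefully balance the $\epsilon$-twist in $g^\tau$ against the order reversal produced by $h'$ on $\la\,,\,\ra_{E'}$, and the clean output $\delta\,gx(\phi)g^{-1}$ relies on the cancellation $\epsilon\cdot\epsilon^\tau=1$. The remainder is bookkeeping, together with the invocation of Witt's theorem, which is classical in all three cases (orthogonal, symplectic, and $\rk/\rk_0$-hermitian) considered here.
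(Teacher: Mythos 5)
Your construction is exactly the paper's map $\Xi:\phi\mapsto(\ker\phi,\,x)$ with $\la xu,v\ra_E=\la\phi(u),\phi(v)\ra_{E'}$, with injectivity on $G'$-orbits obtained from Witt's extension theorem and the equivariance checked by the same direct computation the paper leaves to the reader; your sign bookkeeping in the $\delta=-1$ case (the cancellation $\epsilon\epsilon^\tau=1$ and the order reversal from $\la h'u',h'v'\ra_{E'}=\la v',u'\ra_{E'}$) comes out right. The proposal is correct and takes essentially the same approach as the paper, just with the details written out.
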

\begin{proof}
Recall that the map
\[
  x\mapsto\la x\,\cdot,\,\cdot\ra_E
\]
establishes a $\rk_0$-linear isomorphism form $\g$ onto the space of
$\epsilon'$-hermitian forms on the $\rk$-vector space $E$. Define a
map
\[
   \begin{array}{rcl}
        \Xi: \E'=\Hom_\rk(E,E')&\rightarrow &\tilde \g,\\
             \phi&\mapsto &(x,F),
   \end{array}
\]
where $F$ is the kernel of $\phi$, and $x$ is specified by the
formula
\[
  \la \phi(u),\phi(v)\ra_{E'}=\la xu,v\ra_E,\quad u,v\in E.
\]
Use Witt's Theorem, one finds that two elements of $\E'$ stay in the
same $G'$-orbit precisely when they have the same image under the
map $\Xi$. Therefore $\Xi$ reduces to an embedding
\[
  G'\backslash\E'\hookrightarrow\tilde \g,
\]
which is checked to be $\breve G$-intertwining.
\end{proof}

The following lemma is stated in \cite[Proposition 4.I.2]{MVW87}. We
omit its proof.

\begin{lemp}\label{geometry}
For every $(x,F)\in \tilde \g$, there is an element $(g,-1)\in
\breve{G}$ such that \[
  gxg^{-1}=-x\quad\textrm{ and }\quad gF=F.
\]
\end{lemp}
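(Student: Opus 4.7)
The plan is to build $(g,-1)\in\breve{G}$ block by block from an $x$-stable orthogonal decomposition of $E$ compatible with $F$. The starting point is the identity $\la p(x)u,v\ra_E=\la u,p(-x)v\ra_E$ for every $p\in\rk[T]$, which follows from $x\in\g$. Consequently, decomposing $E=\bigoplus_{p} E_p$ into $\rk[x]$-primary components, one finds that $E_p$ is orthogonal to $E_q$ unless $q(T)$ and $p(-T)$ agree up to a unit. Grouping each primary component with its partner gives a non-degenerate orthogonal $x$-stable decomposition $E=E_0\oplus E_{\neq 0}$, where $x|_{E_0}$ is nilpotent and $x|_{E_{\neq 0}}$ is invertible. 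Since $F\subseteq\ker x\subseteq E_0$, the $F$-condition is only active on $E_0$, and the two summands can be handled independently.

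On $E_{\neq 0}$ one splits further into non-degenerate $x$-stable blocks $W$, each either a self-paired primary block or a hyperbolic pair $V_p\oplus V_{p^\flat}$ in which $V_p$ and $V_{p^\flat}$ are totally isotropic and paired by the form. In either case, choosing adapted $\rk$-bases and defining $g|_W$ to swap (or fold) them $\tau$-semilinearly, with scalars arranged so that $\la gu,gv\ra_E=\la v,u\ra_E$, produces a semilinear involution conjugating $x$ to $-x$; this is a direct calculation with no $F$-constraint to worry about.

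The substantive part of the proof is on the nilpotent summand $E_0$. Here I would invoke the classification of nilpotent elements in the classical Lie algebra $\g$: the pair $(E_0,x|_{E_0})$ decomposes orthogonally as a sum of $x$-stable indecomposable blocks $V^{(i)}$, each being either a cyclic $\rk[x]$-module carrying a self-duality or a hyperbolic pair of dual cyclic modules. On each block there is an explicit Jordan-type basis $v,xv,x^2v,\ldots,x^{n-1}v$ on which a semilinear involution $g^{(i)}$ with $g^{(i)}xg^{(i)-1}=-x|_{V^{(i)}}$ can be written down by hand (of the form $x^kv\mapsto\pm x^kv$, up to swapping in the hyperbolic case). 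By inspection $g^{(i)}$ preserves the canonical filtration $\ker x^k\cap V^{(i)}$, and in particular stabilises the top piece $\ker x\cap V^{(i)}$.

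The decisive step, which I expect to be the main obstacle, is to arrange the block decomposition of $E_0$ so that $F$ becomes block-diagonal, that is, $F=\bigoplus_i(F\cap V^{(i)})$. Note that $\ker x|_{E_0}$ is the direct sum of the top pieces of the blocks, so this reduces to selecting the Jordan generators of the blocks in such a way that a prescribed basis of $F$ appears among these top lines; the remaining Jordan generators must then be chosen inside carefully picked orthogonal complements, which is achievable by iteratively applying Witt's extension theorem inside the non-degenerate space $E_0$. Once this adaptation is in place, gluing the block-wise involutions $g^{(i)}$ together with the construction on $E_{\neq 0}$ yields the required element $(g,-1)\in\breve{G}$ satisfying $gxg^{-1}=-x$ and $gF=F$.
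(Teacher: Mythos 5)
First, note that the paper does not actually prove this lemma: it is quoted from \cite[Proposition 4.I.2]{MVW87} with the proof omitted, so the only meaningful comparison is with the argument in MVW. Your overall strategy is the same as theirs: regard $E$ as a module over $\rk[T]$ equipped with the involution $a\mapsto a^\tau$, $T\mapsto -T$ (in the unitary case your identity should read $\la p(x)u,v\ra_E=\la u,\bar p(-x)v\ra_E$ with $\tau$-conjugated coefficients), split into primary components, observe that $F\subseteq\ker x$ lives entirely in the nilpotent part, and build the semilinear anti-isometry block by block. Several of your blockwise verifications are genuinely routine; for instance $x^kv\mapsto(-1)^kx^kv$, extended $\tau$-semilinearly, does work on a self-dual cyclic nilpotent block and visibly preserves the top line.

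The genuine gap is exactly at the step you yourself flag as decisive: arranging the orthogonal decomposition of $E_0$ into indecomposables so that $F=\bigoplus_i(F\cap V^{(i)})$ with each $F\cap V^{(i)}$ preserved by $g^{(i)}$. The one-line justification ``iteratively applying Witt's extension theorem inside $E_0$'' does not suffice, because Witt's theorem for the form $\la\,,\,\ra_E$ alone produces an isometry with no reason to commute with $x$; what is needed is a Witt-type extension theorem for the pair $(\la\,,\,\ra_E,x)$, i.e.\ for $\epsilon$-hermitian $\rk[T]$-modules, and that is precisely the nontrivial structure theory developed in \cite[Ch.~4]{MVW87} rather than an input one can quote for free. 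The reduction itself is also not quite correctly stated: an arbitrary subspace $F\subseteq\ker x$ need not be a direct sum of ``top lines'' for a given decomposition, and making it so requires re-choosing blocks compatibly with the filtration $\ker x\cap\Im x^j$ (a vector such as $xv_{(2)}+x^2v_{(3)}$ mixing blocks of different lengths forces a new cyclic generator, whose span must again be completed to a non-degenerate or hyperbolic block); moreover, for a hyperbolic pair of blocks the swap involution does not preserve a general line in the sum of the two top lines, so one must use the ``fold'' involution fixing each member of the pair and adapt the pair to $F$ first. None of these points is fatal --- the statement you need is true --- but as written the decisive step is asserted rather than proved. A cleaner organization of the $F$-part, closer in spirit to what the machinery of \cite{MVW87} delivers: since $xF=0$, the subspace $F^\perp$ is automatically $x$-stable, so one may split off the non-degenerate part of $F$ as an orthogonal $x$-stable summand on which $x$ vanishes, reduce to $F$ totally isotropic, and induct; alternatively, prove the $F$-free statement first and then show that any two subspaces of $\ker x$ with the same invariants are conjugate under the centralizer of $x$ in $G$.
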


In other words, every element of $\tilde \g$ is fixed by an element
of $\breve G\setminus G$. Therefore every $G$-orbit in $\tilde \g$
is $\breve G$-stable. Now Lemma \ref{orbite5} implies that every
$G$-orbit in $G'\backslash \E'$ is $\breve G$-stable, or
equivalently, every $\G$-orbit in $\E'$ is $\breve \G$-stable. This
proves Lemma \ref{orbite4}, and the proof of Proposition
\ref{orbite1} is now complete.

\section{Proof of Theorem \ref{theorem}}

We first recall the notions of distributions and generalized
functions on a t.d. group, i.e., a topological group whose
underlying topological space is Hausdorff, secondly countable,
locally compact and totally disconnected. Let $H$ be a t.d. group. A
distribution on $H$ is defined to be a linear functional on
$\con^\infty_0(H)$, the space of compactly supported, locally
constant (complex valued) functions on $H$. Denote by
$\oD^\infty_0(H)$ the space of compactly supported distributions on
$H$ which are locally scalar multiples of a fixed haar measure. A
generalized function on $H$ is defined to be a linear functional on
$\oD^\infty_0(H)$.

Recall the following version of Gelfand-Kazhdan criteria.

\begin{lem}\label{gelfand}
Let $S$ be a closed subgroup of a t.d. group $H$, and let $\sigma$
be a continuous anti-automorphism of $H$. Assume that every
bi-$S$-invariant generalized function on $H$ is $\sigma$-invariant.
Then for every irreducible admissible smooth representations $\pi$
of $H$, one has that
\begin{equation*}
  \dim \Hom_{S}(\pi, \C) \,\cdot\, \dim \Hom_{S}
  (\pi^\ve,\C)\leq 1.
\end{equation*}
\end{lem}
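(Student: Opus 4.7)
The plan is the classical Gelfand--Kazhdan construction: I would attach to each pair $(\ell,\ell')\in\Hom_S(\pi,\C)\times\Hom_S(\pi^{\vee},\C)$ a bi-$S$-invariant generalized function $T_{\ell,\ell'}$ on $H$, and exploit the hypothesized $\sigma$-invariance, together with the irreducibility and admissibility of $\pi$, to force the product $\dim\Hom_S(\pi,\C)\cdot\dim\Hom_S(\pi^{\vee},\C)$ to be at most $1$.

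Explicitly, for $\phi\in\con^{\infty}_0(H)$, I would set
\[
T_{\ell,\ell'}(\phi)\;:=\;\ell\bigl(\pi(\phi)\,\ell'\bigr),
\]
where the convolution operator $\pi(\phi)=\int_H \phi(h)\pi(h)\,dh$ is extended by duality to the full algebraic dual $(\pi^{\vee})^*$.  Admissibility ensures that $\pi(\phi)$ factors through the finite-dimensional subspace $\pi^K\subset\pi$ (for $K$ any compact open subgroup under which $\phi$ is bi-invariant), so $\pi(\phi)\ell'\in\pi$ and the pairing against $\ell\in\pi^*$ is well-defined.  Using $\pi(L_s\phi)=\pi(s)\pi(\phi)$ and $\pi(R_s\phi)=\pi(\phi)\pi(s^{-1})$ (up to the modular character, which cancels) together with the $S$-invariance of $\ell$ and $\ell'$, one checks that $T_{\ell,\ell'}$ is bi-$S$-invariant.

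Next I would establish that the resulting bilinear map $\Theta\colon(\ell,\ell')\mapsto T_{\ell,\ell'}$ is non-degenerate: for any nonzero $\ell'$, the set $\{\pi(\phi)\ell':\phi\in\con^{\infty}_0(H)\}$ forms an $H$-stable nonzero subspace of $\pi$, hence equals $\pi$ by irreducibility; so $T_{\ell,\ell'}=0$ with $\ell'\neq 0$ forces $\ell=0$.  By the hypothesis every $T_{\ell,\ell'}$ is $\sigma$-invariant, and a change of variables $h\mapsto\sigma(h)$ in the defining integral converts this into the symmetry identity
\[
T_{\ell,\ell'}\;=\;T^{\sigma}_{\ell',\ell},
\]
where $T^{\sigma}_{\ell',\ell}$ is the analogous construction with $\pi$ replaced by the $\sigma$-twist $\pi^{\sigma}(h):=\pi(\sigma(h)^{-1})$ and the roles of $\ell,\ell'$ interchanged.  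Combined with the non-degeneracy of $\Theta$, this identity produces a nonzero intertwining map between $\pi^{\sigma}$ and $\pi^{\vee}$, which by Schur's lemma applied to these irreducible admissible representations is an isomorphism $\pi^{\sigma}\cong\pi^{\vee}$; in particular $\dim\Hom_S(\pi,\C)=\dim\Hom_S(\pi^{\vee},\C)$.

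The final step combines the symmetric-swap identity, non-degeneracy, and Schur's lemma to conclude $\dim\Hom_S(\pi,\C)\cdot\dim\Hom_S(\pi^{\vee},\C)\leq 1$.  The main obstacle is precisely this last step: going from the mere equality of dimensions to the \emph{product} bound requires showing that two linearly independent choices cannot coexist on both sides simultaneously.  Concretely, I would reduce to the finite-dimensional $K$-fixed subspaces $\pi^K$ via admissibility, express the $T_{\ell,\ell'}$ restricted to bi-$K$-invariant test functions as matrix-coefficient data, and then apply Schur's lemma on $\pi^K$ to extract the rank bound.  Bookkeeping of the modular function on $S\backslash H$ (when $\sigma$ does not preserve Haar measure) is a secondary subtlety but washes out in the final identity.
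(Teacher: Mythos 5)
Your overall architecture is the standard Gelfand--Kazhdan one, and the first two thirds are sound: the matrix coefficient $T_{\ell,\ell'}(\phi)=\ell(\pi(\phi)\ell')$ of a pair of invariant distributional vectors $\ell\in(\pi^*)^S$, $\ell'\in((\pi^\ve)^*)^S$ is well defined by admissibility, is bi-$S$-invariant, and vanishes only if $\ell=0$ or $\ell'=0$ (your irreducibility argument here is correct). Note that the paper itself gives no proof of Lemma \ref{gelfand} --- it defers entirely to \cite[Theorem 2.2]{SZ} --- so your sketch has to stand against the standard argument. Two secondary issues before the main one: the hypothesis is about \emph{generalized functions}, not distributions, and on a possibly non-unimodular t.d. group the modular character in the right-translation computation does not simply ``cancel''; the clean fix is that $T_{\ell,\ell'}$ is naturally a generalized function (it transforms like a function, as a matrix coefficient does). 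Also, your deduction $\pi^\sigma\cong\pi^\ve\Rightarrow\dim\Hom_S(\pi,\C)=\dim\Hom_S(\pi^\ve,\C)$ tacitly assumes $\sigma(S)=S$, which is not among the hypotheses --- and this equality is neither needed for, nor equivalent to, the stated product bound.

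The genuine gap is the final step, which you yourself flag as ``the main obstacle'' and then do not resolve: knowing $\pi^\sigma\cong\pi^\ve$ gives no multiplicity bound, and ``apply Schur's lemma on $\pi^K$'' is not an argument, since nothing in $\pi^K$ records the interaction between two linearly independent functionals $\ell_1,\ell_2$. The actual mechanism is the convolution identity. Writing $\widehat\phi$ for the transform of $\phi$ by $\sigma$, the $\sigma$-invariance of $T_{\ell,\ell'}$ applied to $\phi*\psi$ yields
\[
\langle\,\ell\circ\pi(\phi),\,\pi(\psi)\ell'\,\rangle
=\langle\,\ell\circ\pi(\widehat\psi),\,\pi(\widehat\phi)\ell'\,\rangle .
\]
Fix a nonzero $\ell'\in\Hom_S(\pi^\ve,\C)$ and, for each nonzero $\ell\in\Hom_S(\pi,\C)$, consider the maps $A_\ell:\con^\infty_0(H)\rightarrow\pi^\ve$, $\phi\mapsto\ell\circ\pi(\phi)$, and $B:\psi\mapsto\pi(\psi)\ell'\in\pi$; both are surjective by irreducibility and admissibility. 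The displayed identity together with the non-degeneracy of the pairing between $\pi^\ve$ and $\pi$ shows that $\phi\in\Ker A_\ell$ if and only if $\widehat\phi\in\Ker B$; in particular $\Ker A_\ell$ is independent of $\ell$. Since each $A_\ell$ intertwines a translation action on $\con^\infty_0(H)$ with the irreducible admissible $\pi^\ve$, Schur's lemma gives $A_{\ell_2}=c\,A_{\ell_1}$, hence $\ell_2=c\,\ell_1$. This proves $\dim\Hom_S(\pi,\C)\le 1$ whenever $\Hom_S(\pi^\ve,\C)\neq 0$, and symmetrically --- which is exactly the product bound. Without this kernel comparison your proposal does not reach the conclusion.
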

Here and henceforth, we use ``$^\ve$" to indicate the contragredient
of an admissible smooth representation. Lemma \ref{gelfand} is
proved in a more general form in \cite[Theorem 2.2]{SZ} for real
reductive groups. The same proof works here and we omit the details.

\vsp

Now we continue with the notation of the last section.

\begin{leml}\label{invgeneralized}
If a generalized function on $\J$ is $\G\times \G$ invariant, then
it is also invariant under the group  $\{\pm 1\}\ltimes_{\{\pm
1\}}(\breve{\G}\times_{\{\pm 1\}}\breve{\G})$.
\end{leml}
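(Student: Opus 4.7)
The plan is to deduce the lemma from Proposition \ref{orbite1} by invoking the standard Gelfand--Kazhdan/MVW principle that orbit preservation implies invariance of invariant generalized functions on totally disconnected spaces.

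Since $\G\times\G$ has index two in $\{\pm 1\}\ltimes_{\{\pm 1\}}(\breve{\G}\times_{\{\pm 1\}}\breve{\G})$, it suffices to fix one representative $\theta$ of the nontrivial coset (so $\delta=-1$) and show that every $\G\times\G$-invariant generalized function on $\J$ is $\theta$-invariant. By the formula (\ref{actionall}), such a $\theta$ acts on $\J$ as the self-homeomorphism
\[
   \mathbf j \;\longmapsto\; (\breve{\mathbf g}_1\,\mathbf j\,\breve{\mathbf g}_2^{-1})^{-1}
\]
for fixed $\breve{\mathbf g}_i\in\breve{\G}$ with $\chi_\G(\breve{\mathbf g}_i)=-1$. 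In particular, $\theta$ is a continuous bijection of $\J$ that normalizes the left--right action of $\G\times\G$.

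By Proposition \ref{orbite1}, $\theta$ preserves every $\G\times\G$-orbit in $\J$. We then invoke the classical principle, which is the geometric core of the Gelfand--Kazhdan/MVW method in the $p$-adic setting: whenever a self-homeomorphism $\theta$ of a t.d. space $X$ normalizes the action of a t.d. group $K$ and preserves every $K$-orbit, every $K$-invariant generalized function on $X$ is automatically $\theta$-invariant. Applied with $X=\J$, $K=\G\times\G$, and $\theta$ as above, this yields the lemma.

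The main obstacle I anticipate is justifying this last principle in the generalized-function setting. The standard route is to stratify $\J$ by a descending chain of closed $\G\times\G$-invariant subsets, and then to use the short exact sequence of equivariant generalized functions attached to each open--closed pair (Bernstein localization) to reduce inductively to the case of a single orbit. On an individual orbit, the claim is immediate from Proposition \ref{orbite1}: for any chosen representative $\mathbf j_0$, the element $\theta$ coincides at $\mathbf j_0$ with the action of a suitable element of $\G\times\G$, and the normalization property propagates this local coincidence to the whole orbit, whence $\G\times\G$-invariance forces $\theta$-invariance.
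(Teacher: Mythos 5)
Your proof is correct and follows essentially the same route as the paper: the paper likewise deduces the lemma from Proposition \ref{orbite1} by citing the Bernstein--Zelevinsky localization theorems (\cite[Theorems 6.9 and 6.15 A]{BZ76}), which encapsulate exactly the stratification-by-orbits reduction to single orbits that you sketch. The only additional detail in the paper is the observation that $\breve{\J}$ is unimodular, which is what lets one pass from generalized functions to distributions before applying those results --- worth making explicit, since you flagged the generalized-function setting as the main obstacle.
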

\begin{proof}
Note that the t.d. group $\breve{\J}$ is unimodular. Therefore we
may replace ``generalized function" by ``distribution" in the proof
of the lemma. Then by \cite[Theorem 6.9 and Theorem 6.15 A]{BZ76},
the lemma is implied by Proposition \ref{orbite1}.
\end{proof}

\begin{leml}\label{pmul}
For every irreducible admissible smooth representations $\Pi$ of
$\J$, one has that
\begin{equation*}
  \dim \Hom_{\G}(\Pi, \C) \,\cdot\, \dim \Hom_{\G}
  (\Pi^\ve,\C)\leq 1.
\end{equation*}
\end{leml}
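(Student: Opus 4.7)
The plan is to invoke the Gelfand--Kazhdan criterion (Lemma \ref{gelfand}) with $H=\J$ and $S=\G$. What this requires is a continuous anti-automorphism $\sigma$ of $\J$ under which every bi-$\G$-invariant generalized function on $\J$ is automatically invariant. The work already carried out in Proposition \ref{orbite1} and Lemma \ref{invgeneralized} essentially hands this to us: any bi-$\G$-invariant generalized function on $\J$ is in fact invariant under the whole group $\{\pm 1\}\ltimes_{\{\pm 1\}}(\breve{\G}\times_{\{\pm 1\}}\breve{\G})$, since its $\delta=1$ slice is exactly $\G\times\G$. So all that remains is to identify, inside this group, an element whose action on $\J$ happens to be an anti-automorphism.

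To produce such an element, I would pick any $\breve{\mathbf g}_0\in\breve{\G}\setminus\G$, so that $\chi_\G(\breve{\mathbf g}_0)=-1$ and the triple $(-1,\breve{\mathbf g}_0,\breve{\mathbf g}_0)$ lies in the fibre product. Reading off its action on $\mathbf j\in\J$ via (\ref{actionall}) gives
\[
  \sigma(\mathbf j):=(\breve{\mathbf g}_0\,\mathbf j\,\breve{\mathbf g}_0^{-1})^{-1}.
\]
Because $\G$ is normal in $\breve{\G}$ and $\H$ is normal in $\breve{\J}$, conjugation by $\breve{\mathbf g}_0$ preserves the subgroup $\J\subset\breve{\J}$, and the one-line calculation
\[
  \sigma(\mathbf j_1\mathbf j_2)=\breve{\mathbf g}_0\,\mathbf j_2^{-1}\mathbf j_1^{-1}\,\breve{\mathbf g}_0^{-1}=\sigma(\mathbf j_2)\,\sigma(\mathbf j_1)
\]
confirms that $\sigma$ is a continuous anti-automorphism of $\J$.

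With $\sigma$ in hand, the argument closes quickly: bi-$\G$-invariance of a generalized function on $\J$ is precisely invariance under the index-two subgroup $\G\times\G$ of $\{\pm 1\}\ltimes_{\{\pm 1\}}(\breve{\G}\times_{\{\pm 1\}}\breve{\G})$, which by Lemma \ref{invgeneralized} is upgraded to invariance under the full group, hence under the element $(-1,\breve{\mathbf g}_0,\breve{\mathbf g}_0)$, i.e.\ under $\sigma$. Applying Lemma \ref{gelfand} to $\Pi$ then yields the stated inequality. The genuinely hard content has been absorbed into Proposition \ref{orbite1}; the only obstacle at the present stage is the verification that the ``$\delta=-1$''-element of the big group acts on $\J$ by an honest anti-automorphism (rather than merely an anti-automorphism of the larger group $\breve{\J}$), and this is handled by the normality observations and short computation above.
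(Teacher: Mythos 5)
Your proposal is correct and follows essentially the same route as the paper: both apply the Gelfand--Kazhdan criterion (Lemma \ref{gelfand}) with $H=\J$, $S=\G$, using Lemma \ref{invgeneralized} to upgrade bi-$\G$-invariance of generalized functions to invariance under an element $(-1,\breve{\mathbf g}_0,\breve{\mathbf g}_0)$ of the fibre product, which acts on $\J$ as an anti-automorphism. The only difference is that you write out the verification that this element genuinely gives an anti-automorphism of $\J$, which the paper leaves as a one-line remark.
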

\begin{proof}
The lemma follows from Lemma \ref{gelfand} and Lemma
\ref{invgeneralized} by noting that an element of the form
\[
   (-1,\breve{\mathbf g}, \breve{\mathbf g})\in \{\pm 1\}\ltimes_{\{\pm
1\}}(\breve{\G}\times_{\{\pm 1\}}\breve{\G})
\]
acts as an anti-automorphism on $\J$.
\end{proof}

Let $\omega_\psi$, $\pi$ and $\pi'$ be as in Theorem A. As in the
proof of \cite[Lemma 5.3]{Sun08},
$\omega_{\psi}\otimes\pi^\ve\otimes \pi'^\ve$ is an irreducible
admissible smooth representation of $\J$. Therefore Lemma \ref{pmul}
implies that
\begin{equation}\label{pin}
  \dim \Hom_{\G}(\omega_{\psi}\otimes\pi^\ve\otimes
\pi'^\ve, \C) \,\cdot\, \dim \Hom_{\G}
  (\omega_{\psi}^\ve\otimes\pi\otimes
   \pi',\C)\leq 1.
\end{equation}
By \cite[Theorem 1.4]{Sun09}, the two factors in the left hand side
of (\ref{pin}) are equal. Therefore
\[
   \dim \Hom_{\G}(\omega_{\psi}\otimes\pi^\ve\otimes \pi'^\ve, \C)\leq
   1,
\]
and consequently,
\[
   \dim \Hom_{\G}(\omega_{\psi},\pi\otimes \pi')\leq
   1.
\]
This finishes the proof of Theorem A.

\end{document}